\newtheorem{theorem}{Theorem}[section]
\newtheorem{lemma}[theorem]{Lemma}
\newtheorem{proposition}[theorem]{Proposition}
\theoremstyle{definition}\newtheorem{definition}[theorem]{Definition}}
\theoremstyle{definition}}
\newtheorem*{thmdg}{Theorem D}
\newtheorem*{thms}{Theorem S}
\numberwithin{equation}{section}
\def\C{{\mathbb C}}
\def\N{{\mathbb N}}
\def\Z{{\mathbb Z}}
\def\R{{\mathbb R}}
\def\K{{\mathbb K}}
\def\F{{\mathcal F}}
\def\epsilon{\varepsilon}
\def\phi{\varphi}
\def\leq{\leqslant}
\def\ker{\hbox{\tt ker}\,}
\def\spann{\hbox{\tt span}\,}
\def\det{\hbox{\tt det}\,}
\title{A short proof of existence of disjoint hypercyclic operators}
\author{Stanislav Shkarin}
\date{}
\begin{document}

\maketitle

\begin{abstract} We give a short proof of existence of disjoint
hypercyclic tuples of operators of any given length on any separable
infinite dimensional Fr\'echet space. Similar argument provides
disjoint dual hypercyclic tuples of operators of any length on any
infinite dimensional Banach space with separable dual.
\end{abstract}

\small \noindent{\bf MSC:} \ \ 47A16, 37A25

\noindent{\bf Keywords:} \ \ Hypercyclic operators; dual hypercyclic
operators; disjoint hypercyclic operators \normalsize

\section{Introduction \label{s1}}\rm

All vector spaces in this article are over the field $\K$, being
either the field $\C$ of complex numbers or the field $\R$ of real
numbers. As usual, $\Z_+$ is the set of non-negative integers and
$\N$ is the set of positive integers. Symbol $L(X)$ stands for the
space of continuous linear operators on a topological vector space
$X$, while $X'$ is the space of continuous linear functionals on
$X$. If $X$ is a normed space, then $X'$ is assumed to carry the
standard norm topology. For each $T\in L(X)$, the dual operator
$T':X'\to X'$ is defined as usual: $(T'f)(x)=f(Tx)$ for $f\in X'$
and $x\in X$. An $\F$-space is a complete metrizable topological
vector space. A locally convex $\F$-space is called a {\it
Fr\'echet} space. Recall that $x\in X$ is called a {\it hypercyclic
vector} for $T\in L(X)$ if $\{T^nx:n\in\Z_+\}$ is dense in $X$ and
$T$ is called {\it hypercyclic} if it has a hypercyclic vector. We
refer to the book \cite{bama-book} and references therein for
further information on hypercyclic operators. Recently B\'es and
Peris \cite{dh-bp} and Bernal-Gonz\'alez \cite{dh-bernal}
independently introduced the following concept.

\begin{definition}\label{dho1} Let $X$ be a topological vector space,
$m\in\N$ and ${\bf T}=(T_1,\dots,T_m)\in L(X)^m$. The $m$-tuple $\bf
T$ is called {\it disjoint hypercyclic} if there exists $x\in X$
such that
$$
O(x,{\bf T})=\{(T_1^nx,\dots,T_m^nx):n\in\Z_+\}\ \ \text{is dense in
$X^m$.}
$$
\end{definition}

Obviously, $\bf T$ is disjoint hypercyclic if and only if the
operator $T_1\oplus{\dots}\oplus T_m$ has a hypercyclic vector of
the shape $(x,\dots,x)$ for some $x\in X$. Salas \cite{dh-salas}
introduced the following concept.

\begin{definition}\label{dho2} Let $X$ be a Banach space,
$m\in\N$ and ${\bf T}=(T_1,\dots,T_m)\in L(X)^m$. Then $\bf T$ is
called {\it disjoint dual hypercyclic} if both $\bf T$ and ${\bf
T}'=(T'_1,\dots,T'_m)\in L(X')^m$ are disjoint hypercyclic.
\end{definition}

In \cite{dh-bp} and \cite{dh-bernal} interesting examples of
disjoint hypercyclic $m$-tuples are provided. B\'es, Martin and
Peris \cite{dh-bmp} and Salas \cite{dh-salas} independently
demonstrated that for any $m\in\N$, any separable infinite
dimensional Banach space supports a disjoint hypercyclic $m$-tuple
of continuous linear operators. Moreover, the construction in
\cite{dh-bmp} works also for separable infinite dimensional
Fr\'echet spaces, while the construction in \cite{dh-salas} provides
a dual disjoint hypercyclic $m$-tuple on any infinite dimensional
Banach space with separable dual. The following theorems summarize
these results.

\begin{thmdg}Let $X$ be a separable infinite dimensional Fr\'echet
space and $m\in\N$. Then there exists a disjoint hypercyclic tuple
${\bf T}=(T_1,\dots,T_m)\in L(X)^m$.
\end{thmdg}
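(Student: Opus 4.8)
The plan is to use the reformulation recorded just above the statement: it suffices to produce $T_1,\dots,T_m\in L(X)$ and a single $x\in X$ for which the diagonal orbit $\{(T_1^nx,\dots,T_m^nx):n\in\Z_+\}$ is dense in $X^m$. I would manufacture the $T_j$ as generalized backward shifts acting on $m$ interleaved ``channels'', arranged so that, asymptotically, channel $j$ is seen only by $T_j$. Since $X$ is separable and infinite dimensional, I would first fix a linearly independent sequence $\{u_k\}_{k\geq0}$ with dense linear span $E$, together with an increasing sequence of continuous seminorms $(p_\ell)_{\ell\geq1}$ generating the topology.

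I would then build, by one induction on $n$, an auxiliary sequence $e_0,e_1,e_2,\dots$ with dense span, entering with rapidly decaying seminorms, and partitioned into blocks labelled by a channel $j\in\{1,\dots,m\}$ and a time. Each $T_j$ is defined on $E$ by a weighted shift that moves channel $j$ one step toward a fixed reading position while leaving the other channels asymptotically negligible, the weights being chosen so that $p_\ell(T_jv)\leq C_\ell\,p_{\ell+1}(v)$ for all $v\in E$; this forces $T_j$ to extend continuously to all of $X$. The vector $x$ is assembled from the blocks so that, along a chosen increasing sequence of times $n_1<n_2<\dots$, the tuple $(T_1^{n_k}x,\dots,T_m^{n_k}x)$ exposes a prescribed $2^{-k}$-approximation (in $p_k$) to the $k$th term of a fixed dense sequence in $X^m$. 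A tidy way to organise the density half is the disjoint hypercyclicity criterion of B\'es and Peris \cite{dh-bp}: one supplies a dense set on which every $T_j^n\to0$ together with right inverses $S_{j,n}\to0$ satisfying $T_i^nS_{j,n}\to\delta_{ij}I$, all of which drop out of the channel design.

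The delicate point---where the two requirements collide---is that a general Fr\'echet space carries no Schauder basis, so the shifts must be fabricated from the merely dense, independent sequence $\{u_k\}$, and one cannot assume that the channel projections are continuous. Thus the operators have to be defined so that continuity rests only on the seminorm bound $p_\ell(T_jv)\leq C_\ell\,p_{\ell+1}(v)$ and not on any splitting of $X$ into channels. Balancing the decay of the $e_n$ (needed for continuity of the $T_j$) against the reach of the high powers $T_j^{n_k}$ (needed for density), simultaneously across all $m$ channels and without cross-talk between them, is the heart of the argument; once the weights and the interleaving are fixed correctly, the remainder is bookkeeping with the $p_\ell$ and the density of $E$.
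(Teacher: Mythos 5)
Your blueprint heads down the direct-construction route of B\'es--Martin--Peris \cite{dh-bmp}, and as written it has a genuine gap at exactly the point you flag as ``the heart of the argument'' and then defer: nothing in the proposal makes the seminorm bound $p_\ell(T_jv)\leq C_\ell\,p_{\ell+1}(v)$ attainable. If $T_j$ is defined on $E=\spann\{u_k\}$ by shifting coordinates with respect to a merely dense, linearly independent sequence, then $T_j$ is only algebraically well defined; the coordinate functionals of such a sequence are in general \emph{discontinuous} on $X$ (and unboundable by any continuous seminorm), so a coordinate-wise shift is typically unbounded on $E$ and admits no continuous extension. The known way around this---which is the actual content of the Bonet--Peris construction \cite{bonper} that this whole circle of results rests on---is to build, alongside the vectors $e_n$, a biorthogonal system of functionals $f_n\in X'$ lying in an equicontinuous set (via Hahn--Banach with quantitative control), and to take operators of the form $T_j=I+B_j$ with $B_j x=\sum_n \epsilon_n f_n(x)e_{\sigma_j(n)}$ for rapidly decreasing $\epsilon_n$, so that continuity comes from the equicontinuity of the $f_n$ rather than from any bound on raw coordinates. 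Your proposal never introduces these functionals, so the continuity claim has no support; and once you do pass to operators of the form $I+B_j$, the verification of the disjointness criterion ($T_j^n\to0$ on a dense set, right inverses $S_{j,n}\to0$, $T_i^nS_{j,n}\to\delta_{ij}I$) involves binomial expansions of $(I+B_j)^n$ and a nontrivial interleaving analysis---this is precisely the technical work of \cite{dh-bmp}, which you assert is ``bookkeeping'' but do not carry out. So the proposal is a plausible outline of the long known proof with its two hardest steps missing, not a proof.

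For contrast, the paper avoids all construction: by Proposition~\ref{sum1} (already available from \cite{bonper}, \cite{gri}, \cite{bp}) there is a single $T\in L(X)$ with $T^{\oplus m}$ hypercyclic; if $(x_1,\dots,x_m)$ is a hypercyclic vector for $T^{\oplus m}$, Lemma~\ref{lcs} (transitivity of $GL(X)$ on $X\setminus\{0\}$, an easy Hahn--Banach exercise) yields $S_j\in GL(X)$ with $S_jx_j=x$ for a common $x$, and Lemma~\ref{mainl} shows the conjugates $R_j=S_jTS_j^{-1}$ form a disjoint hypercyclic tuple, since the similarity $S_1\oplus\dots\oplus S_m$ carries the dense orbit of $(x_1,\dots,x_m)$ onto the orbit of the diagonal vector $(x,\dots,x)$. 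The moral is that disjoint hypercyclicity only asks that \emph{some} hypercyclic vector of a direct sum lie on the diagonal, and conjugation lets you move it there; no shifts, weights, or criteria are needed. If you want to rescue your approach you must at minimum build the equicontinuous biorthogonal system first, at which point you are re-deriving \cite{dh-bmp} rather than proving the theorem from what is already known.
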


\begin{thms}Let $X$ be an infinite dimensional Banach space with
separable dual $X'$ and $m\in\N$. Then there exists a disjoint dual
hypercyclic tuple ${\bf T}=(T_1,\dots,T_m)\in L(X)^m$.
\end{thms}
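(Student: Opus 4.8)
The plan is to prove Theorem~S by first replacing the abstract space $X$ with a concrete coordinate model and then building the tuple on that model so that the required mixing behaviour holds simultaneously on $X$ and on $X'$, in parallel with (but strictly refining) the construction behind Theorem~D. Since $X'$ is separable, $X$ is separable and, crucially, $X$ admits a \emph{shrinking} Markushevich basis: a biorthogonal system $(e_n,f_n)_{n\in\Z}$ (I index by $\Z$ on purpose, see below) with $f_i(e_j)=\delta_{ij}$, with $\overline{\spann}\{e_n:n\in\Z\}=X$, and with $\spann\{f_n:n\in\Z\}$ \emph{norm}-dense in $X'$. The shrinking property is exactly the ingredient that is absent in Theorem~D and that separability of $X'$ supplies; it is what lets the dynamical argument be transported from $X$ to $X'$, because it makes $\spann\{f_n\}$ a legitimate dense test set in $X'$.

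With this scaffold fixed, I would take each $T_j$ to be a bounded perturbation of a \emph{bilateral} weighted-shift-type operator adapted to $(e_n)_{n\in\Z}$, roughly $T_j e_n = w^{(j)}_n e_{n+1}$. The point of going bilateral is dual symmetry: the transpose of a bilateral weighted shift is again a bilateral weighted shift (in the opposite direction), so $T_j'$ acts on $(f_n)_{n\in\Z}$ as an operator of exactly the same structural type. Thus a single choice of two-sided weight patterns can be arranged to make every $T_j$ and every $T_j'$ individually hypercyclic (via a Salas-type condition on the weight products), while the weights are forced to decay fast enough off a finite window to keep each $T_j$ genuinely bounded even though $(e_n)$ need only be an M-basis rather than a Schauder basis. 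To obtain \emph{disjointness}, I would let the $m$ operators advance along $m$ decoupled families of coordinates (e.g.\ distinct interleaved progressions or distinct shift geometries), so that applying $T_l^n$ to a vector built from the $j$-th family with $l\neq j$ drives it off to infinity, i.e.\ annihilates it in the limit.

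I would then verify disjoint hypercyclicity through the standard disjointness criterion: exhibit an increasing sequence $(n_k)$, the dense set $Y=\spann\{e_n\}$, and right-inverse maps $S_{j,k}$ on $Y$ with $T_i^{n_k}\to 0$ pointwise on $Y$, with $S_{j,k}\to 0$ pointwise, and with $T_l^{n_k}S_{j,k}\to \delta_{lj}\,\mathrm{Id}$ pointwise on $Y$. The decoupled block structure forces the off-diagonal limits ($l\neq j$) to vanish, and the weight products make the diagonal limits converge, so the criterion applies and $\bf T$ is disjoint hypercyclic on $X$. The identical computation, now read on $\spann\{f_n\}$ with the transposes $T_1',\dots,T_m'$ and the companion inverse maps, shows that ${\bf T}'=(T_1',\dots,T_m')$ is disjoint hypercyclic on $X'$; here the shrinking property is precisely what certifies $\spann\{f_n\}$ as the dense subset of $X'$ that the criterion requires.

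I expect the main obstacle to be reconciling all of these demands with one family of bounded operators at once. The weights must decay quickly enough to keep every $T_j$ bounded, yet the perturbation must stay structured enough that the transpose dynamics on $X'$ genuinely mirror the dynamics on $X$, and the block decomposition must make the full $m\times m$ array of limits diagonal \emph{both} upstairs and downstairs. The subtlety is that density upstairs is governed by $(e_n)$ being merely an M-basis while density downstairs relies on $(f_n)$ being norm-dense by shrinkingness, so the estimates must be tuned to hold on both sides simultaneously. Once a single admissible two-sided weight sequence is produced, the two verifications become symmetric and routine; the entire difficulty is concentrated in choosing that weight sequence and the coordinate partition so that boundedness, dual symmetry, and $m$-fold disjointness coexist.
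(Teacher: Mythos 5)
Your plan is not a proof but a program, and it contains a genuine obstruction at its core, independent of how the details are tuned. With respect to a Markushevich basis (shrinking or not), the only way to make a coordinate-defined shift $Tu=\sum_n w_n f_n(u)e_{n+1}$ bounded is an absolute-convergence estimate $\sum_n |w_n|\,\|f_n\|\,\|e_{n+1}\|<\infty$ --- an M-basis, unlike a Schauder basis, gives you no basis inequalities to work with, and a space with separable dual need not have a Schauder basis. But such an operator is nuclear, hence compact, and a compact operator on a Banach space is never hypercyclic. So ``weights decaying fast enough off a finite window to keep each $T_j$ genuinely bounded'' is self-defeating for a pure shift. The standard escape (Bonet--Peris, Salas \cite{saldual}) is to take $T_j=I+B_j$ with $B_j$ a nuclear generalized backward shift; but then your verification scheme collapses, because $(I+B)^{n}u=\sum_k\binom{n}{k}B^k u$ contains the term $u$ itself plus terms with huge binomial coefficients, so $T_i^{n_k}\not\to 0$ pointwise on $\spann\{e_n\}$ and the blow-up/collapse disjointness criterion in the form you state simply does not apply; the known arguments for $I+B$ require a genuinely different binomial analysis. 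Your ``decoupled coordinate families'' device is also suspect as stated: each $T_l$ must be hypercyclic on all of $X$, so it cannot act trivially on the $j$-th family, and if it acts there like the identity plus something small, the off-diagonal limits $T_l^{n_k}S_{j,k}$ tend to the identity on that family rather than to $0$. What disjointness actually exploits in the shift setting (B\'es--Peris \cite{dh-bp}) is different powers or different weight growth rates of the \emph{same} shift structure, not disjoint supports.

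Even if all of this were repaired, you would essentially be reconstructing Salas's preprint \cite{dh-salas}, i.e.\ the long route that this paper is explicitly written to avoid. The paper instead reduces Theorem~S to the already known Proposition~\ref{sum2}: take a single $T$ with $T^{\oplus m}$ and $T'^{\oplus m}$ hypercyclic, pick a hypercyclic tuple $(f_1,\dots,f_m)$ for $T'^{\oplus m}$ and a compatible hypercyclic tuple $(x_1,\dots,x_m)$ for $T^{\oplus m}$ (using density of hypercyclic vectors to enforce the normalizations $f(x)=f_j(x_j)=1$ and $f(x_j)f_j(x)\neq f(x)f_j(x_j)$), and then conjugate the $j$-th copy by an invertible $S_j$ which \emph{simultaneously} sends $x_j\mapsto x$ and $f\mapsto f_j$ under $S_j'$ (Lemma~\ref{lcs1}, a two-sided refinement of the transitivity of $GL(X)$). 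Lemma~\ref{mainl} then gives a common hypercyclic vector $x$ for $(R_1,\dots,R_m)$ and, since $R_j'=(S_j')^{-1}T'S_j'$ with $(S_j')^{-1}f_j=f$, a common hypercyclic functional $f$ for $(R_1',\dots,R_m')$. The lesson is that separability of $X'$ enters only through the citation of Salas's single-operator result, not through any basis-theoretic construction; no shrinking M-basis, weight sequence, or disjointness criterion is needed.
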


It is worth noting that Theorem~D generalizes the result of Bonet
and Peris \cite{bonper}, who demonstrated that each separable
infinite dimensional Fr\'echet space $X$ supports a hypercyclic
operator $T$. Grivaux \cite{gri} observed that hypercyclic operators
$T$ constructed in \cite{bonper} are in fact mixing and therefore
hereditarily hypercyclic. That is, for each infinite subset $A$ of
$\Z_+$ there is $x\in X$ such that $\{T^nx:n\in A\}$ is dense in
$X$. According to B\'es and Peris \cite{bp}, the direct sums of
finitely many copies of $T$ are also hypercyclic. The following
proposition summarizes this observation.

\begin{proposition}\label{sum1} For any separable infinite dimensional
Fr\'echet space $X$, there is $T\in L(X)$ such that the direct sum
$T\oplus{\dots}\oplus T=T^{\oplus m}$ of $m$ copies of $T$ is a
hypercyclic operator on $X^m$ for each $m\in\N$.
\end{proposition}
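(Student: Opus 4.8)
The plan is to build $T$ directly from the operators already discussed in the introduction and to reduce hypercyclicity of $T^{\oplus m}$ to a transitivity property that the mixing condition makes easy to verify simultaneously in all coordinates. First I would invoke the Bonet--Peris construction \cite{bonper} to fix a hypercyclic operator $T\in L(X)$ on the given separable infinite dimensional Fr\'echet space $X$, and then use Grivaux's observation \cite{gri} that this $T$ may be taken \emph{mixing}: for every pair of nonempty open sets $U,V\subseteq X$ there is $N\in\N$ with $T^n(U)\cap V\neq\varnothing$ for all $n\geq N$. The point of working with $T$ mixing rather than merely hypercyclic is that the uniform quantifier ``for all $n\geq N$'' is exactly what allows one to synchronize a single iterate across several coordinates at once.

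Next I would appeal to the Birkhoff transitivity theorem: since $X^m$ is again a separable Fr\'echet space (finite products preserve separability and completeness, and a Fr\'echet space is a Baire space), the operator $T^{\oplus m}\in L(X^m)$ is hypercyclic if and only if it is topologically transitive, that is, for every pair of nonempty open $\mathcal U,\mathcal V\subseteq X^m$ there is $n\in\Z_+$ with $(T^{\oplus m})^n(\mathcal U)\cap\mathcal V\neq\varnothing$.

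To verify transitivity I would shrink $\mathcal U$ and $\mathcal V$ to basic open boxes $\mathcal U=U_1\times\dots\times U_m$ and $\mathcal V=V_1\times\dots\times V_m$ with each $U_j,V_j$ nonempty and open in $X$. Applying the mixing property of $T$ to each pair $(U_j,V_j)$ yields thresholds $N_j\in\N$ with $T^n(U_j)\cap V_j\neq\varnothing$ for all $n\geq N_j$; setting $N=\max_{1\leq j\leq m}N_j$, any single $n\geq N$ then satisfies $T^n(U_j)\cap V_j\neq\varnothing$ simultaneously for every $j$, and choosing $x_j\in U_j$ with $T^nx_j\in V_j$ gives $(x_1,\dots,x_m)\in\mathcal U$ with $(T^{\oplus m})^n(x_1,\dots,x_m)\in\mathcal V$. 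Hence $T^{\oplus m}$ is transitive, and therefore hypercyclic, for every $m\in\N$ with the same operator $T$.

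I expect the only genuine subtlety to lie in the first step, namely the passage from mere hypercyclicity to the mixing property: an arbitrary hypercyclic operator need not even make $T\oplus T$ hypercyclic, so the whole argument hinges on having the stronger mixing conclusion of \cite{gri} (equivalently, on the B\'es--Peris equivalences of \cite{bp}) available for the Bonet--Peris operators. Everything after that is the routine coordinatewise synchronization above.
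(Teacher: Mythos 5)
Your proposal is correct and takes essentially the same route the paper does: the paper also starts from the Bonet--Peris operator \cite{bonper}, invokes Grivaux's observation \cite{gri} that it is mixing, and concludes that all finite direct sums $T^{\oplus m}$ are hypercyclic. The only difference is cosmetic: where the paper cites B\'es--Peris \cite{bp} (via hereditary hypercyclicity) for the last step, you verify directly---and correctly---that mixing passes to finite direct sums by synchronizing the thresholds, and then apply Birkhoff transitivity on the separable Fr\'echet space $X^m$, which simply makes that step self-contained.
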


Salas \cite{saldual} proved that each infinite dimensional Banach
space $X$ with separable dual supports a dual hypercyclic operator
$T$. It is worth noting that the operator $T$ constructed by Salas
has an extra property. Namely, both $T$ and $T'$ satisfy the
hypercyclicity criterion \cite{bp} and therefore the direct sums of
finitely many copies of $T$ as well as the direct sums of finitely
many copies of $T'$ are hypercyclic. The following proposition
summarizes this observation.

\begin{proposition}\label{sum2} For any infinite dimensional
Banach space $X$ with separable $X'$, there is $T\in L(X)$ such that
the direct sum $T^{\oplus m}$ of $m$ copies of $T$ is a hypercyclic
operator on $X^m$ and the direct sum $T^{\prime\oplus m}$ of $m$
copies of $T'$ is a hypercyclic operator on $(X')^m$ for each
$m\in\N$.
\end{proposition}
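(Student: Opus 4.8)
The plan is to reduce the claim to two ingredients already in the literature: Salas's construction of a dual hypercyclic operator from \cite{saldual} and the direct-sum characterization of the Hypercyclicity Criterion due to B\'es and Peris \cite{bp}. The guiding observation, recorded in the paragraph preceding the statement, is that the operator $T$ produced by Salas has the stronger feature that \emph{both} $T$ and its dual $T'$ satisfy the Hypercyclicity Criterion, and this criterion is precisely what governs the hypercyclicity of finite direct sums.

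Recall that $T\in L(Y)$ on a separable Banach space $Y$ satisfies the Hypercyclicity Criterion if there are dense subsets $D_1,D_2\subseteq Y$, an increasing sequence $(n_k)$ in $\N$ and maps $S_k:D_2\to Y$ with $T^{n_k}u\to 0$ for $u\in D_1$, $S_ku\to 0$ for $u\in D_2$, and $T^{n_k}S_ku\to u$ for $u\in D_2$. The B\'es--Peris theorem \cite{bp} states that $T$ satisfies this criterion if and only if $T^{\oplus m}$ is hypercyclic on $Y^m$ for every $m\in\N$ (equivalently, already for $m=2$). Hence it suffices to produce a single $T\in L(X)$ for which both $T$ and $T'$ satisfy the criterion: applying the equivalence to $T$ on $X$ gives hypercyclicity of $T^{\oplus m}$ on $X^m$, and applying it to $T'$ on $X'$ gives hypercyclicity of $T^{\prime\oplus m}$ on $(X')^m$, for all $m$. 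Note that since $X'$ is separable, $X$ is separable too, so that both $X$ and $X'$ are separable Banach spaces and the theorem legitimately applies on each.

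Concretely, I would invoke \cite{saldual}: as $X$ is infinite dimensional with separable dual, Salas exhibits $T\in L(X)$ with $T$ and $T'$ both hypercyclic, and an inspection of the construction shows that the witnessing data in fact verify the Hypercyclicity Criterion for $T$ and, after transposition, for $T'$. The routine direct-sum bookkeeping is then automatic. The one genuine point requiring care is this verification: one must extract from Salas's construction the dense sets, the subsequence $(n_k)$ and the right inverses $S_k$ for $T$, and confirm that the transposed data supply the criterion for $T'$ simultaneously; the separability of $X'$ is indispensable here, being exactly what makes $X'$ support hypercyclic operators in the first place.
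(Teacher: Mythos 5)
Your proposal is correct and matches the paper's own justification: the paper likewise treats Proposition~\ref{sum2} as a summary of the observation that the operator $T$ constructed by Salas in \cite{saldual} is such that both $T$ and $T'$ satisfy the Hypercyclicity Criterion, whence by the B\'es--Peris theorem \cite{bp} all finite direct sums $T^{\oplus m}$ and $T^{\prime\oplus m}$ are hypercyclic. The paper offers no further detail beyond this reduction, so your additional care about extracting the criterion data from Salas's construction only makes the argument more explicit, not different.
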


Although constructions in \cite{dh-bmp,dh-salas} are interesting in
their own right, it turns out that there is a short and simple
reduction of Theorems~D and~S to the already known
Propositions~\ref{sum1} and~\ref{sum2}. The objective of this short
note is to show this.

\section{Reduction of Theorems~D and~S to Propositions~\ref{sum1}
and~\ref{sum2}}

Let $X$ be a topological vector space. Throughout this section
$GL(X)$ stands for the set of invertible linear operators $T:X\to X$
such that both $T$ and $T^{-1}$ are continuous.

\begin{lemma}\label{mainl}Let $m\in\N$, $X$ be a topological vector space,
$T_1,\dots,T_m\in L(X)$ and $(x_1,\dots,x_m)\in X^m$ be a
hypercyclic vector for $T_1\oplus{\dots}\oplus T_m$. Assume also
that $S_1,\dots,S_m\in GL(X)$ and $x\in X$ are such that $S_jx_j=x$
for $1\leq j\leq m$. Then $(x,\dots,x)\in X^m$ is a hypercyclic
vector for $R_1\oplus{\dots}\oplus R_m$, where $R_j=S_jT_jS_j^{-1}$.
In particular, $(R_1,\dots,R_m)$ is disjoint hypercyclic.
\end{lemma}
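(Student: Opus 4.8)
The plan is to compute the $R_1\oplus\dots\oplus R_m$-orbit of the diagonal vector $(x,\dots,x)$ directly and to recognize it as the image, under a homeomorphism of $X^m$, of the already-dense orbit of $(x_1,\dots,x_m)$ under $T_1\oplus\dots\oplus T_m$. Once that identification is in place, density is preserved and the conclusion follows.

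First I would record the conjugacy identity. Since $R_j=S_jT_jS_j^{-1}$ and $S_j\in GL(X)$, an immediate induction on $n$ gives $R_j^n=S_jT_j^nS_j^{-1}$ for every $n\in\Z_+$ (the case $n=0$ being $R_j^0=I=S_jS_j^{-1}$). The hypothesis $S_jx_j=x$ means $S_j^{-1}x=x_j$, so for each $n$ I obtain
$$
R_j^nx=S_jT_j^nS_j^{-1}x=S_jT_j^nx_j.
$$
Setting $S=S_1\oplus\dots\oplus S_m$ and $R=R_1\oplus\dots\oplus R_m$, this reads coordinatewise as
$$
R^n(x,\dots,x)=\bigl(S_1T_1^nx_1,\dots,S_mT_m^nx_m\bigr)=S\bigl((T_1\oplus\dots\oplus T_m)^n(x_1,\dots,x_m)\bigr).
$$
Thus the orbit of $(x,\dots,x)$ under $R$ is exactly the image under $S$ of the orbit of $(x_1,\dots,x_m)$ under $T_1\oplus\dots\oplus T_m$.

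Next I would invoke that $S$ is a homeomorphism of $X^m$: each $S_j$ and each $S_j^{-1}$ is continuous, so $S$ is continuous with continuous inverse $S^{-1}=S_1^{-1}\oplus\dots\oplus S_m^{-1}$, whence $S\in GL(X^m)$. A homeomorphism carries dense sets to dense sets. By hypothesis $(x_1,\dots,x_m)$ is a hypercyclic vector for $T_1\oplus\dots\oplus T_m$, so its orbit is dense in $X^m$; applying $S$ keeps it dense. Therefore the orbit of $(x,\dots,x)$ under $R$ is dense in $X^m$, that is, $(x,\dots,x)$ is a hypercyclic vector for $R_1\oplus\dots\oplus R_m$.

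Finally, the \emph{in particular} clause is immediate from the remark following Definition~\ref{dho1}: a tuple $(R_1,\dots,R_m)$ is disjoint hypercyclic precisely when $R_1\oplus\dots\oplus R_m$ has a hypercyclic vector of the diagonal shape $(x,\dots,x)$, which is exactly what the previous paragraph supplies. I expect no genuine obstacle here; the only points demanding a little care are the bookkeeping in the conjugacy identity $R_j^nx=S_jT_j^nx_j$ and the observation that it is membership in $GL(X)$ (invertibility together with continuity of the inverse), rather than mere continuity of $S_j$, that lets $S$ transport dense orbits to dense orbits.
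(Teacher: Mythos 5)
Your proof is correct and follows essentially the same route as the paper: the paper's one-line argument applies $\Lambda=S_1\oplus\dots\oplus S_m\in GL(X^m)$ to the dense orbit of $(x_1,\dots,x_m)$ and identifies $\Lambda(O)$ with the orbit of $(x,\dots,x)$ via a ``standard similarity argument,'' which is precisely the conjugacy computation $R_j^n x=S_jT_j^nx_j$ that you spell out in detail. Your write-up simply makes explicit what the paper leaves implicit, including the correct emphasis that invertibility with continuous inverse (not mere continuity) is what lets $S$ preserve density.
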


\begin{proof} Since the orbit $O$ of $(x_1,\dots,x_m)$ with respect
to $T_1\oplus{\dots}\oplus T_m$ is dense in $X^m$ and\break
$\Lambda=S_1\oplus{\dots}\oplus S_m\in GL(X^m)$, $\Lambda(O)$ is
also dense in $X^m$. Standard similarity argument shows that
$\Lambda(O)$ is exactly the orbit of $(x,\dots,x)$ with respect to
$R_1\oplus{\dots}\oplus R_m$.
\end{proof}

The following lemma is an elementary and well-known fact. We include
the proof for the sake of completeness.

\begin{lemma}\label{lcs} Let $X$ be a locally convex topological
vector space. Then the group $GL(X)$ acts transitively on
$X\setminus\{0\}$. That is, for each non-zero $x,y\in X$, there is
$S\in GL(X)$ such that $Sx=y$.
\end{lemma}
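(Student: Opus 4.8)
The plan is to produce $S$ as a rank-one perturbation of the identity. Since $X$ is locally convex and $x\neq 0$, the Hahn--Banach theorem supplies a functional $f\in X'$ with $f(x)=1$; local convexity is exactly what guarantees that $X'$ separates points and hence that such an $f$ exists. With any such $f$ I would define $S\in L(X)$ by $Sz=z+f(z)(y-x)$. Then $Sx=x+f(x)(y-x)=y$ immediately, so the only remaining issue is invertibility.

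For invertibility I would look for an inverse of the same rank-one form. Writing $u=y-x$, a direct computation with $Sz=z+f(z)u$ shows that $S$ has the continuous inverse $z\mapsto z-\frac{1}{f(y)}f(z)(y-x)$ precisely when $1+f(u)=f(y)\neq 0$; both $S$ and this inverse are then continuous rank-one perturbations of the identity, so $S\in GL(X)$. Thus everything reduces to arranging $f(x)=1$ together with $f(y)\neq 0$.

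This last requirement is the only delicate point, and it splits into two cases. If $y$ is a scalar multiple $\lambda x$ of $x$ (necessarily $\lambda\neq 0$ since $y\neq 0$), then any $f$ with $f(x)=1$ automatically gives $f(y)=\lambda\neq 0$, so the functional above already works. If instead $x$ and $y$ are linearly independent, I would first prescribe $f(x)=1$ and $f(y)=1$ on the two-dimensional subspace $\spann\{x,y\}$ and then extend $f$ to all of $X$ by Hahn--Banach, again using local convexity. In either case the required $f$ exists, completing the construction.

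The main obstacle is conceptual rather than computational: one must ensure that a single functional can be chosen to satisfy both $f(x)=1$ and $f(y)\neq 0$, which is why the argument separates the dependent and independent cases. Local convexity, via Hahn--Banach, is essential throughout, since in a general $\F$-space $X'$ need not separate points and no such $f$ need exist.
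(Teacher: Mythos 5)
Your proof is correct, but it takes a genuinely different route from the paper's. Both arguments build $S$ as a finite-rank perturbation of the identity via Hahn--Banach (which is indeed exactly where local convexity enters) and both split into the cases $y\in\K x$ and $x,y$ linearly independent, but the rank and the invertibility mechanism differ. The paper picks a biorthogonal pair $f,g\in X'$ with $f(x)=g(y)=1$ and $f(y)=g(x)=0$, and sets $Su=u+(g-f)(u)x+(f-g)(u)y$: a rank-two perturbation that \emph{swaps} $x$ and $y$, acts as the identity on $\ker f\cap\ker g$, and is an involution, so $S^{-1}=S$ and invertibility is automatic with no nondegeneracy condition to arrange (in the dependent case the paper just takes $S=\lambda I$). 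Your construction is more economical: one functional, one unified formula $Sz=z+f(z)(y-x)$ covering both cases, and an explicit Sherman--Morrison-type inverse $z\mapsto z-\frac{1}{f(y)}f(z)(y-x)$; your bookkeeping is right, since $1+f(y-x)=f(y)$ when $f(x)=1$, and prescribing $f\equiv 1$ on $\spann\{x,y\}$ (continuous on a finite-dimensional subspace, then extended by Hahn--Banach) secures the needed $f(y)\neq 0$ in the independent case. The trade-off surfaces only later in the paper: the two-functional, rank-two format is what generalizes to Lemma~\ref{lcs1}, where one must simultaneously arrange $Sx=y$ and $S'f=g$. A rank-one perturbation $u\mapsto u+f(u)v$ has dual $\phi\mapsto\phi+\phi(v)f$, which can only move $f$ to a scalar multiple of itself, so your construction cannot be upgraded to that dual-pair setting, whereas the paper's can.
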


\begin{proof} Let $x$ and $y$ be non-zero vectors in $X$. If $x$ and
$y$ are not linearly independent, there exists non-zero
$\lambda\in\K$ such that $y=\lambda x$. Clearly $S=\lambda I\in
GL(X)$ and $Sx=y$. Assume now that $x$ and $y$ are linearly
independent. Using the Hahn--Banach theorem, we can find $f,g\in X'$
such that $f(x)=g(y)=1$ and $f(y)=g(x)=0$. Now we define $S\in L(X)$
by the formula $Su=u+(g-f)(u)x+(f-g)(u)y$. It is easy to see that
$S\in GL(X)$ and $Sx=y$.
\end{proof}

\begin{proof}[Proof of Theorem~D] Let $X$ be a separable infinite
dimensional Fr\'echet space and $m\in\N$. By Proposition~\ref{sum1},
there is $T\in L(X)$ such that $T^{\oplus m}\in L(X^m)$ is
hypercyclic. Let $(x_1,\dots,x_m)$ be a hypercyclic vector for
$T^{\oplus m}$. By Lemma~\ref{lcs}, we can pick $S_1,\dots,S_m\in
GL(X)$ and $x\in X$ such that $S_jx_j=x$ for $1\leq j\leq m$. By
Lemma~\ref{mainl}, $R_1\oplus{\dots}\oplus R_m$ is disjoint
hypercyclic, where $R_j=S_jTS_j^{-1}$.
\end{proof}

In order to prove Theorem~S, we need a slightly more sophisticated
version of Lemma~\ref{lcs}. First, we need the following elementary
lemma.

\begin{lemma}\label{numbers} Let $\alpha,\beta,\gamma\in\K$ be such
that $(\alpha,\beta)\neq(0,0)$ and $(\beta,\gamma)\neq(0,0)$. Then
there exist $a,b,c\in\K$ such that $a\alpha+b\beta=-1$,
$b\alpha+c\beta=1$ and $a\beta+b\gamma\neq 0$.
\end{lemma}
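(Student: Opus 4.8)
The plan is to split on whether $\beta=0$, since $\beta$ is the coefficient shared by both equations and it governs how much freedom remains in the system.

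First I would treat the generic case $\beta\neq0$. Here the second requirement $b\alpha+c\beta=1$ can be solved for $c$ regardless of $a$ and $b$, by taking $c=(1-b\alpha)/\beta$; so it suffices to produce a pair $(a,b)$ satisfying the affine equation $a\alpha+b\beta=-1$ together with the inequality $a\beta+b\gamma\neq0$. The solutions of $a\alpha+b\beta=-1$ form a line $L\subset\K^2$ that misses the origin (the right-hand side is $-1$) and is infinite, because $(\alpha,\beta)\neq(0,0)$ and $\K$ is infinite; meanwhile the solutions of $a\beta+b\gamma=0$ form a line $M$ through the origin, a genuine line since $(\beta,\gamma)\neq(0,0)$. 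Because $0\in M$ but $0\notin L$, the two lines are distinct and hence meet in at most one point. Thus all but at most one point of $L$ satisfy $a\beta+b\gamma\neq0$, and any such point, together with the corresponding $c$, settles this case.

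Next I would dispose of the degenerate case $\beta=0$. The hypotheses then force $\alpha\neq0$ (from $(\alpha,\beta)\neq(0,0)$) and $\gamma\neq0$ (from $(\beta,\gamma)\neq(0,0)$). The two equations collapse to $a\alpha=-1$ and $b\alpha=1$, which pin down $a=-1/\alpha$ and $b=1/\alpha$, while $c$ is free and may be set to $0$. The remaining inequality reads $a\beta+b\gamma=b\gamma=\gamma/\alpha$, which is nonzero precisely because both $\gamma$ and $\alpha$ are nonzero.

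The only step that requires a moment's thought is the geometric observation in the first case: an affine line avoiding the origin cannot coincide with a line through the origin, so the two lines intersect in at most one point and leave infinitely many admissible choices of $(a,b)$. Once that is in hand, both cases reduce to elementary substitutions, so I do not expect any genuine obstacle.
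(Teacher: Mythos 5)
Your proof is correct, but in the main case it takes a genuinely different route from the paper's. The paper's proof is a pure verification: it splits into three cases ($\beta\neq0,\gamma\neq0$; $\beta\neq0,\gamma=0$; $\beta=0$) and in each exhibits explicit witnesses (for instance $a=0$, $b=-\beta^{-1}$, $c=(\alpha+\beta)\beta^{-2}$ in the first case), leaving the three conditions to be checked by substitution. You instead treat all of $\beta\neq0$ uniformly: since $c$ occurs only in the second equation and its coefficient $\beta$ is invertible, you eliminate $c$ via $c=(1-b\alpha)/\beta$ and reduce to choosing $(a,b)$ on the affine line $a\alpha+b\beta=-1$, which avoids the origin, while staying off the line $a\beta+b\gamma=0$ through the origin; the two lines are distinct (one contains $0$, the other does not), hence meet in at most one point, and the affine line is infinite, so admissible $(a,b)$ abound. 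Your $\beta=0$ case coincides exactly with the paper's ($a=-\alpha^{-1}$, $b=\alpha^{-1}$, $c=0$). As for what each approach buys: the paper's formulas are constructive and self-contained, require no cardinality considerations, and hand concrete constants to anyone tracing them into the proof of Lemma~\ref{lcs1}; your genericity argument avoids the extra case split on $\gamma$ and the ad hoc formulas, and it explains \emph{why} the lemma holds --- one affine constraint plus one open condition in the plane. Incidentally, your appeal to the infinitude of $\K$ is stronger than needed: an affine line over any field has at least two points and loses at most one to the intersection, so the argument works over every field, though for $\K\in\{\R,\C\}$ the point is moot.
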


\begin{proof} If $\beta\neq 0$ and $\gamma\neq 0$, we set $a=0$,
$b=-\beta^{-1}$ and $c=(\alpha+\beta)\beta^{-2}$. If $\beta\neq 0$
and $\gamma=0$, we set $a=1$, $b=-(1+\alpha)\beta^{-1}$ and
$c=(\alpha+\beta+\alpha^2)\beta^{-2}$. If $\beta=0$, then
$\alpha\neq 0$ and $\gamma\neq 0$, and we set $a=-\alpha^{-1}$,
$b=\alpha^{-1}$ and $c=0$. It remains to observe that the required
conditions are satisfied.
\end{proof}

\begin{lemma}\label{lcs1} Let $X$ be a topological
vector space, $x,y\in X$ and $f,g\in X'$ be such that $f(y)=g(x)$
and $f(x)g(y)\neq f(y)g(x)$. Then there exists $S\in GL(X)$ such
that $Sx=y$ and $S'f=g$.
\end{lemma}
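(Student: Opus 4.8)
The plan is to produce $S$ as a rank-two perturbation of the identity whose perturbing part is built \emph{symmetrically} from the data $x,y,f,g$. Concretely, I would set $\alpha=f(x)$, $\beta=f(y)=g(x)$ (using the hypothesis $f(y)=g(x)$) and $\gamma=g(y)$, and look for $S$ of the form
\[
Sw=w+(af(w)+bg(w))\,x+(bf(w)+cg(w))\,y ,
\]
where the scalars $a,b,c\in\K$ are to be chosen. The crucial feature is that the coefficient matrix $\left(\begin{smallmatrix}a&b\\ b&c\end{smallmatrix}\right)$ coupling $(f,g)$ to $(x,y)$ is symmetric.

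First I would record that the hypothesis $f(x)g(y)\neq f(y)g(x)$ reads $\alpha\gamma\neq\beta^2$, which forces $(\alpha,\beta)\neq(0,0)$ and $(\beta,\gamma)\neq(0,0)$, so that Lemma~\ref{numbers} applies and yields $a,b,c$ with $a\alpha+b\beta=-1$, $b\alpha+c\beta=1$ and $a\beta+b\gamma\neq 0$. The pleasant point, which I would verify by direct substitution, is that the two conditions $Sx=y$ and $S'f=g$ \emph{both} collapse to the very same pair of scalar equations $a\alpha+b\beta=-1$ and $b\alpha+c\beta=1$. Indeed, evaluating $Sx$ gives $(1+a\alpha+b\beta)x+(b\alpha+c\beta)y$, while for every $w$ one has $f(Sw)=(1+a\alpha+b\beta)f(w)+(b\alpha+c\beta)g(w)$; in each case the first two relations of Lemma~\ref{numbers} are exactly what is needed.

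It then remains to check that $S\in GL(X)$. Since $f,g$ are continuous, $S=I+R$ with $R$ continuous and of range contained in $\spann\{x,y\}$; note that $\alpha\gamma\neq\beta^2$ also forces $x,y$ to be linearly independent. Invertibility of such a finite-rank perturbation is governed by the $2\times 2$ matrix $I_2+M$, where $M_{ij}$ is the value of the $i$-th perturbing functional on the $j$-th of $x,y$. After substituting $a\alpha+b\beta=-1$ and $b\alpha+c\beta=1$, I would compute that $\det(I_2+M)=-(a\beta+b\gamma)$, which is nonzero precisely by the third conclusion of Lemma~\ref{numbers}. Hence $S$ is invertible with continuous inverse (again identity plus a finite-rank operator), so $S\in GL(X)$ with $Sx=y$ and $S'f=g$.

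The only genuinely creative step is the choice of the symmetric ansatz: once one insists that the matrix coupling $(f,g)$ to $(x,y)$ be symmetric, the primal equation $Sx=y$ and the dual equation $S'f=g$ become one and the same linear system, and the inequality condition $a\beta+b\gamma\neq 0$ of Lemma~\ref{numbers} turns out to be exactly the nonvanishing of the determinant controlling invertibility. I expect the bookkeeping in the determinant computation to be the only place requiring care; everything else is a routine substitution.
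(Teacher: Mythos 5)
Your proposal is correct and is essentially the paper's own proof: the same symmetric ansatz $Su=u+(af(u)+bg(u))x+(bf(u)+cg(u))y$, the same reduction of both $Sx=y$ and $S'f=g$ to the two equations of Lemma~\ref{numbers}, and the same determinant $-(a\beta+b\gamma)\neq 0$. The only cosmetic difference is in verifying invertibility: the paper splits $X=(\ker f\cap\ker g)\oplus\spann\{x,y\}$ and checks the $2\times 2$ matrix of $S$ on the second summand, while you invoke the standard finite-rank-perturbation criterion via $\det(I_2+M)$ --- the underlying computation is identical.
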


\begin{proof} Let $\alpha=f(x)$, $\beta=g(x)$ and $\gamma=g(y)$.
Since $f(x)g(y)\neq f(y)g(x)=g(x)^2$, $(\alpha,\beta)\neq (0,0)$ and
$(\beta,\gamma)\neq (0,0)$. By Lemma~\ref{numbers}, we can find
$a,b,c\in\K$ such that
\begin{equation}\label{equ}
af(x)+bg(x)=-1,\quad bf(x)+cg(x)=1\ \ \text{and}\ \ ag(x)+bg(y)\neq
0.
\end{equation}
Now we consider $S\in L(X)$ defined by the formula
\begin{equation}\label{S}
Su=u+af(u)x+bf(u)y+bg(u)x+cg(u)y.
\end{equation}
We shall demonstrate that $S$ satisfies all required conditions. It
is easy to see that the dual operator $S'$ acts according to the
formula
\begin{equation}\label{Sp}
S'\phi=\phi+a\phi(x)f+b\phi(y)f+b\phi(x)g+c\phi(y)g.
\end{equation}
Substituting $u=x$ into (\ref{S}) and using the equalities in
(\ref{equ}), we see that $Sx=y$. Substituting $\phi=f$ into
(\ref{Sp}) and using the equalities in (\ref{equ}), we see that
$S'f=g$. It remains to show that $S\in GL(X)$.

Since $f(x)g(y)\neq f(y)g(x)$, we see that $f,g$ are linearly
independent, $x,y$ are linearly independent and $X=L\oplus N$, where
$L=\ker f\cap \ker g$ and $N=\spann\{x,y\}$. Moreover, from
(\ref{S}) it follows that $L$ and $N$ are both invariant for $S$ and
that $Su=u$ for each $u\in L$. Thus, in order to show that $S\in
GL(X)$, it suffices to demonstrate that $S\bigr|_N$ is an invertible
operator on the 2-dimensional space $N$. The matrix $M$ of
$S\bigr|_N$ in the basis $\{x,y\}$ has shape
$$
M=\,\text{\small
$\begin{pmatrix}af(x)+bg(x)+1&af(y)+bg(y)\\
bf(x)+cg(x)&bf(y)+cg(y)+1\end{pmatrix}$}.
$$
Using the equalities in (\ref{equ}) and $f(y)=g(x)$, we can rewrite
the last matrix:
$$
M=\,\text{\small $\begin{pmatrix}0&ag(x)+bg(y)\\
1&bg(x)+cg(y)+1\end{pmatrix}$}.
$$
Hence $\det M=-(ag(x)+bg(y))\neq 0$ according to (\ref{equ}). Thus
$S\bigr|_N$ is an invertible operator on $N$ and therefore $S\in
GL(X)$.
\end{proof}

\begin{proof}[Proof of Theorem~S] Let $X$ be an infinite dimensional
Banach space such that $X'$ is separable. By Proposition~\ref{sum2},
there exists $T\in L(X)$ such that $T^{\oplus m}\in L(X^m)$ and
$T^{\prime\oplus m}\in L((X')^m)$ are hypercyclic. Pick any $x\in X$
and $f\in X'$ such that $f(x)=1$. Since the set of hypercyclic
vectors of any hypercyclic operator is dense \cite{bama-book}, we
can find a hypercyclic vector $(f_1,\dots,f_m)$ for $T^{\prime\oplus
m}$ such that functionals $f,f_1,\dots,f_m$ are linearly
independent. Once again, using the density of the set of hypercyclic
vectors of a hypercyclic operator, we can find a hypercyclic vector
$(y_1,\dots,y_m)$ for $T^{\oplus m}$ such that $f_j(y_j)\neq 0$ and
$f(y_j)f_j(x)\neq f(x)f_j(y_j)$ for $1\leq j\leq m$. Next, let
$x_j=c_jy_j$, where $c_j=\frac{1}{f_j(y_j)}$. Since $c_j$ are
non-zero constants, $(x_1,\dots,x_n)$ is also a hypercyclic vector
for $T^{\oplus m}$. Moreover, $f(x_j)f_j(x)\neq f(x)f_j(x_j)$ and
$f(x)=f_j(x_j)=1$ for $1\leq j\leq m$.  By Lemma~\ref{lcs1}, we can
find $S_j\in GL(X)$ such that $S_jx_j=x$ and $S'_jf=f_j$ for $1\leq
j\leq m$. By Lemma~\ref{mainl}, $(R_1,\dots,R_m)\in L(X)^m$ is
disjoint hypercyclic, where $R_j=S_jT_jS_j^{-1}$. Since
$R'_j=(S'_j)^{-1}T'S'_j$, $(S'_j)^{-1}\in GL(X')$ and
$(S'_j)^{-1}f_j=f$, Lemma~\ref{mainl} implies that
$(R'_1,\dots,R'_m)\in L(X')^m$ is disjoint hypercyclic. That is,
$(R_1,\dots,R_m)$ is disjoint dual hypercyclic.
\end{proof}

\small\rm

\vskip1truecm

\scshape

\noindent Stanislav Shkarin

\noindent Queens's University Belfast

\noindent Department of Pure Mathematics

\noindent University road, Belfast, BT7 1NN, UK

\noindent E-mail address: \qquad {\tt s.shkarin@qub.ac.uk}

\end{document}